\documentclass[authoryear]{elsarticle}
\usepackage{amsfonts}
\usepackage{amsmath}
\usepackage[english]{babel}
\usepackage{inputenc}
\usepackage{natbib}
\usepackage{geometry}
\usepackage{float}
\usepackage{graphicx}
\usepackage{caption}
\usepackage{subcaption}
\usepackage{amsthm}

\restylefloat{table}

\marginparwidth=10pt
\oddsidemargin = 15pt

\hyphenation{tem-po-ra-lly}
\hyphenation{pro-cess}
\hyphenation{ex-ten-si-ve-lly}
\hyphenation{al-ways}
\hyphenation{A-ccordingly}
\hyphenation{co-va-rian-ce}
\hyphenation{Forth-co-ming}
\hyphenation{Me-a-su-res}
\hyphenation{u-sua-lly}
\hyphenation{pro-duct}
\hyphenation{pa-ra-me-ters}

\def\S{\mathbb{S}}

\newtheorem{prop}{Proposition}

\usepackage{color}

\begin{document}
\begin{frontmatter}

\title{Log Gaussian Cox processes on the sphere}

%% Group authors per affiliation:

\author[label1]{Francisco Cuevas-Pacheco}
\ead{francisco@math.aau.dk}

\author[label1]{Jesper M{\o}ller\corref{cor1}}
\address[label1]{Department of Mathematical Sciences, Aalborg University, Denmark}
\ead{jm@math.aau.dk}

\cortext[cor1]{Corresponding author}

\begin{abstract}
	\noindent A log Gaussian Cox process (LGCP) is a doubly stochastic construction consisting of a Poisson point process with a random log-intensity given by a Gaussian random field. Statistical methodology have mainly been developed for LGCPs defined in the $d$-dimensional Euclidean space. This paper concerns the case of LGCPs on the $d$-dimensional sphere, with $d=2$ of primary interest. We discuss the existence problem of such LGCPs, provide sufficient existence conditions, and establish further
	useful theoretical properties. The results are applied for the description of sky positions of galaxies, in comparison with previous analysis based on a Thomas process, using simple estimation procedures and making a careful model checking. We account for inhomogeneity in our models, and as the model checking is based on a thinning procedure which produces homogeneous/isotropic LGCPs, we discuss its sensitivity.
\end{abstract}

\begin{keyword}
H\"{o}lder continuity
%\sep minimum contrast estimation
\sep pair correlation function
\sep point processes on the sphere 
\sep reduced Palm distribution 
\sep second order intensity reweighted homogeneity
\sep thinning procedure for model checking 
\end{keyword}

\end{frontmatter}

%\maketitle

\section{Introduction}

Statistical analysis of point patterns on the sphere have been of interest for a long time, see \cite{lawrence2016point},  \cite{Moller_summary_2017}, and the references therein. 
%\citep[e.g.][]{Pebles74,PeblesGroth75, Ripley77,BachallSoneira81,ScottTout89,Raskin94,Robeson2014,lawrence2016point,Moller_summary_2017}. 
Although models and methods developed for planar and spatial point processes may be adapted, statistical methodology for point processes on the sphere is still not so developed as discussed in the mentioned references and in the following. 

The focus has been on developing functional summary statistics, parametric models, and inference procedures. For homogeneous point patterns, \citet{Robeson2014} studied Ripley's $K$-function on the sphere \citep{Ripley76,Ripley77}, and \citet{lawrence2016point} provided a  careful presentation of Ripley's $K$-function and other functional summary statistics such as the empty space function $F$, the nearest-neighbour distance function $G$, and the $J=(1-G)/(1-F)$ function, including how to account for edge effects (i.e.\ when the data is a point pattern observed within a window strictly included in the sphere and unobserved points outside this window may effect the data). See also \citet{Moller_summary_2017} for details and the connection to reduced Palm distributions. 
For inhomogeneous point patterns, \citet{lawrence2016point} and \citet{Moller_summary_2017} studied the pair correlation function
and the inhomogeneous $K$-function. The models which have been detailed are rather scarse: Homogeneous Poisson point process models \citep{Raskin94,Robeson2014}; determinantal point processes \citep{Moller_summary_2017,Moller2018} for regular/repulsive point patterns; inhomogeneous Poisson point process models and Thomas point process models for aggregated/clustered point patterns (Lawrence et al., 2016\nocite{lawrence2016point}; Section~\ref{s:PoissonCox}-\ref{s:data} in the present paper); and inhomogeneous log Gaussian Cox processes (LGCPs) for aggregated/clustered point patterns using the R-INLA approach (Simpson et al., 2016\nocite{Simpson.et.al}; Section~\ref{s:final2} in the present paper). 

This paper treats inhomogeneous aggregated/clustered point patterns on the sphere and studies how the theory of Cox processes and in particular LGCPs \citep{Moller1998} can be adapted to analysing such data, using other LGCPs models than in \citet{Simpson.et.al}
and 
simpler inference procedures than the R-INLA approach, where we exploit the nice moment properties of LGCPs. In particular,
we demonstrate that an inhomogeneous LGCP provides a better description of the sky positions of galaxies than analysed in \citet{lawrence2016point} by using an inhomogeneous Thomas process.
% who used an inhomogeneous Thomas process (the data will be described in Section~\ref{s:data}). 
For comparison, as in  \citet{lawrence2016point}, we use a minimum contrast procedure for parameter estimation, where we discuss the  
sensibility of the choice of user-specified parameters. No model checking was done for the fitted inhomogeneous Thomas process in \citet{lawrence2016point}. 
Moreover, we show how a thinning procedure can be applied 
 to generate homogeneous point patterns so that the $F,G,J$-functions can be used for model checking, and we discuss the sensitivity of this thinning procedure.

The paper is organised as follows. Section~\ref{s:background} provides the setting and needed background material on point processes, particularly on Poisson and Cox processes, the data example of sky positions of galaxies, and the inhomogeneous Thomas process introduced in \citet{lawrence2016point}. Section~\ref{s:LGCP} contains the definition and existence conditions for LGCPs on the sphere, studies their useful properties, and compares the 
fitted Thomas processes and LGCPs for the data example. 
%In particular, we exploit the fact that $\exp(c)$ is the so-called pair correlation function of a LGCP if $c$ is the covariance function of its underlying Gaussian random field. 
Finally, Section~\ref{s:final} summarizes our results, establishes some further results, and discusses future directions for research. 

\section{Background}\label{s:background}
\subsection{Setting}
Let $\mathbb{S}^{d} = \{ x \in \mathbb{R}^{d+1}: \|x\| = 1\}$ denote the $d$-dimensional unit sphere included in the $(d+1)$-dimensional Euclidean space $\mathbb{R}^{d+1}$, equipped with the usual inner product $\langle x, y\rangle=\sum_{i=0}^dx_iy_i$ for points $x=(x_0,\ldots,x_d),y=(y_0,\ldots,y_d)\in\mathbb R^{d+1}$  and the usual length $\|x\|=\sqrt{\langle x, x\rangle}$. We are mainly interested in the case of $d=2$. By a region $U\subseteq \mathbb{S}^{d}$ we mean that $U$ is a Borel set. 
For unit vectors $u,v\in\mathbb{S}^{d}$, let  $d(u,v) = \arccos(\langle u,v \rangle)$
be the geodesic distance on the sphere. 

By a point process on $\mathbb{S}^{d}$, we understand a random finite subset $\textbf{X}$ of $\mathbb{S}^{d}$. 
%Typically $W$ will be an observation window. 
We say that $\textbf{X}$ is isotropic if $O\textbf{X}$ is distributed as $\textbf{X}$ for any $(d+1)\times(d+1)$ rotation matrix $O$. We assume that $\textbf{X}$ has an intensity function, $\lambda(u)$, and a pair correlation function, $g(u,v)$, meaning that if $U,V\subseteq\mathbb S^d$ are disjoint regions and $N(U)$ denotes the cardinality of $X\cap U$, then
\[\mathrm E[N(U)]=\int_U\lambda(u)\,\mathrm du,\qquad 
\mathrm E[N(U)N(V)]=\int_U\int_V\lambda(u)\lambda(v)g(u,v)\,\mathrm du\,\mathrm dv,\]
where $\mathrm du$ is the Lebesgue/surface measure on $\mathbb{S}^{d}$. We say that $\textbf{X}$ is (first order) homogeneous if $\lambda(u)=\lambda$ is constant, and second order intensity reweighted homogeneous if $g(u,v)=g(r)$ only depends on $r=d(u,v)$. Note that these properties are implied by isotropy of $\textbf{X}$, and second order intensity reweighted homogeneity allows to define the (inhomogeneous) $K$-function \citep{Baddeley2000} by
\[K(r)=\int_{d(u,v)\le r} g(u,v)\,\mathrm dv,\qquad 0\le r\le\pi,\]
for an arbitrary chosen point $u\in\mathbb S^d$, where $\mathrm dv$ denotes Lebesgue/surface measure on $\mathbb S^d$. For instance,
\begin{equation}\label{e:K-g}
K(r)=2\pi\int_0^r g(s)\sin s\,\mathrm ds\qquad \mbox{if }d=2.
\end{equation}

\subsection{Poisson and Cox processes}\label{s:PoissonCox}

For a Poisson process $\mathbf X$ on $\S^{d}$ with intensity function $\lambda$, $N(\mathbb S^d)$ is Poisson distributed with mean $\int\lambda(u)\,\mathrm du$, and conditional on $N(\mathbb S^d)$, the points in $\mathbf X$ are independent identically distributed with a density proportional to $\lambda$. The process is second order intensity reweighted homogeneous, with $K$-function
\begin{equation}\label{e:KPois}K_{\mathrm{Pois}}(r)=2\pi(1-\cos r)\qquad\mbox{if }d=2.\end{equation}

Let $\mathbf{Z}=\{\mathbf{Z}(u):u\in\mathbb S^d\}$ be a non-negative random field so that almost surely $\int\mathbf{Z}(u)\,\mathrm du$ is well-defined and finite, and 
$Z(u)$ has finite variance for all $u \in\mathbb{S}^d$. We say that $\mathbf{X}$ is a Cox process driven by $\mathbf{Z}$ if $\mathbf{X}$ conditional on $\mathbf{Z}$ is a Poisson process on $\S^{d}$ with intensity function $\mathbf{Z}$. Then $\mathbf{X}$
has intensity function 
\begin{align}\label{cox_intensity1}
\lambda(u) = \mathbb{E}[\mathbf{Z}(u)],
\end{align}
and defining the residual driving random field by
$\mathbf Z_0=\{\mathbf Z_0(u):u\in\mathbb S^d\}$, where $\mathbf Z_0(u)=\mathbf Z(u)/\lambda(u)$ (setting $0/0=0$), $\mathbf{X}$
has pair correlation function 
\begin{align}\label{cox_intensity2}
g(u,v) = {\mathbb{E}\left[\mathbf{Z_0}(u)\mathbf{Z_0}(v)\right]}. %, %{\lambda(u)\lambda(v)}. %,\qquad u,v\in\mathbb S^d.
\end{align}
Thus
$\mathbf{X}$ is second order intensity reweighted homogeneous if $\mathbf Z_0$ is isotropic, that is, $\{\mathbf Z_0(Ou):u\in\mathbb S^d\}$ is distributed as $\mathbf Z_0$ for any $(d+1)\times(d+1)$ rotation matrix $O$. 
We shall naturally specify such Cox processes in terms of $\lambda$ and $\mathbf Z_0$.

\subsection{Data example and inhomogeneous Thomas process}\label{s:data}

Figure~\ref{galaxies_plot_a} shows the sky positions of 10,546 galaxies from the Revised New General Catalogue and Index Catalogue (RNGC/IC) \citep{steinicke2015}.
Here, we are following \citet{lawrence2016point} in making a rotation of the original data so that the two circles limit a band around the equator, which is an approximation of the part of the sky obscured by the Milky Way, and the observation window $W\subset\mathbb S^2$ is the complement of the band; 64 galaxies contained in the band are omitted. 

\begin{figure} %[H]
\begin{subfigure}{.5\linewidth}
\centering
\includegraphics[scale=0.4]{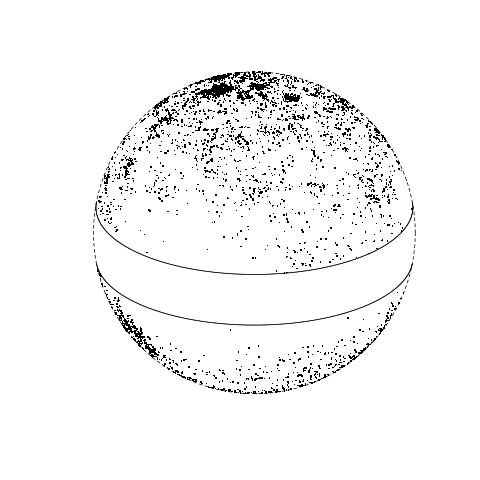} %galaxies_normal.png
\caption{Sky positions of galaxies before thinning.}
\label{galaxies_plot_a}
\end{subfigure}
\begin{subfigure}{.5\linewidth}
\centering
\includegraphics[scale=0.4]{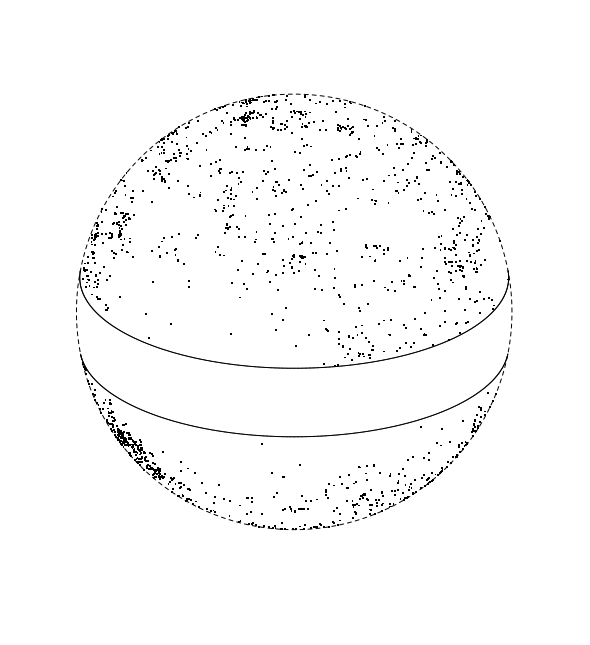} %galaxies_rotated.png
\caption{Sky positions of galaxies after thinning.}\label{galaxies_plot_b}
\end{subfigure}
\caption{The sky positions of (a) 10,546 galaxies and (b) 3,285 galaxies obtained after a thinning procedure so that a homogeneous point pattern is expected to be obtained. The circles limit the part of the sky obscured by the Milky Way.}\label{galaxies_plot}
\end{figure}

Different plots and tests in the accompanying supporting information to \citet{lawrence2016point} clearly show that the galaxies are aggregated and not well-described by an inhomogeneous Poisson process model. \citet{lawrence2016point} fitted the intensity function
\begin{align}
\lambda(u) =  %\begin{cases} 
6.06 -0.112 \sin\theta \cos\phi -0.149 \sin\theta \sin\phi + 0.320\cos\theta + 1.971 \cos^2\theta, 
%&\mbox{if }u\in W, \\ 
%0 & \mbox{otherwise}, \end{cases}
\label{intensity}
\end{align}
where $u=(\sin\theta\cos\phi,\sin\theta\sin\phi,\cos\theta)$, $\theta\in[0,\pi]$ is the colatitude, and $\phi\in[0,2\pi)$ is the longitude. The term $-0.112 \sin\theta \cos\phi -0.149 \sin\theta \sin\phi + 0.320\cos\theta$ is the inner product of $u$ with $(-0.112,-0.149,0.32)$, and the term $1.971 \cos^2\theta$ allows a gradient in intensity from the poles to the equator, cf.\ the discussion in  \citet{lawrence2016point} and their plot Movie 2 (where we recommend using the electronic version of the paper and the link in the caption to this plot).
Throughout this paper, we also use \eqref{intensity}. 

\citet{lawrence2016point} proposed an inhomogeneous Thomas process, that is, a Cox process with intensity function \eqref{intensity} and isotropic
residual driving random field given by
\begin{align}\mathbf Z_0(u)=\sum_{y\in\mathbf Y}f_{y,\xi}(u)/\kappa,\qquad u\in\mathbb S^2,\label{e:res}
\end{align}
where  $\mathbf Y$ is a homogeneous Poisson process on $\mathbb S^2$ with intensity $\kappa>0$, and where 
\[f_{y,\xi}(u)=\frac{\xi}{4\pi\sinh\xi}\exp\left(\xi\langle u,y\rangle\right),\qquad u\in\mathbb S^2,\]
is the density for a von Mises-Fisher distribution on $\mathbb S^2$ with mean direction $y$ and concentration parameter $\xi>0$. They estimated $\kappa$ and $\eta$ by a minimum contrast procedure \citep{Diggle1983,diggle2013statistical}, where a non-parametric estimate $\widehat K$ is compared to 
\[K(r)=K_{(\kappa,\eta)}(r)=K_{\mathrm{Pois}}(r)+\frac{\cosh(2\xi)-\cosh\left(\sqrt{2\xi^2(1+\cos r)}\right)}{4\kappa\sin^2\xi}.\]  
Specifically, the minimum contrast estimate $(\hat\kappa,\hat\eta)$ is  minimising the contrast
\begin{equation}\label{e:contrast}
\int_a^b\left(\widehat K(r)^{0.25}-K_{(\kappa,\eta)}(r)^{0.25}\right)^2\,\mathrm dr,
\end{equation}
where $b>a\ge0$ are user-specified parameters. 
\citet{lawrence2016point} used the integration interval $[a,b]=[0,1.396]$, where 1.396 is the maximal value of the smallest distance from the north pole respective south pole to the boundary of $W$. They obtained $\hat\kappa=5.64$ and $\hat\xi=266.6$. 

\section{Log Gaussian Cox processes}\label{s:LGCP}
\subsection{Definition and existence}\label{s:DefExLGCP}

Let $\mathbf{Y}=\{\mathbf{Y}(u):u\in\mathbb S^d\}$ be a Gaussian random field (GRF), that is, $\sum_{i=1}^n a_i\mathbf Y(u_i)$ is normally distributed for any integer $n>0$, numbers $a_1,\ldots,a_n$, and $u_1,\ldots,u_n\in\mathbb S^d$. Let $\mu(u)=\mathrm E\mathbf Y(u)$ be its mean function and $c(u,v)=\mathrm E[(\mathbf Y(u)-\mu(u))(\mathbf Y(v)-\mu(v))]$ its covariance function. Assuming that almost surely $\mathbf Z:=\exp(\mathbf Y)$ is integrable, the Cox process $\mathbf X$ driven by $\mathbf{Z}$ is called a log Gaussian Cox process \citep[LGCP;][]{Moller1998,MW04}. 

Define the corresponding mean-zero GRF, $\mathbf Y_0:=\mathbf{Y}-\mu$, which has also covariance function $c$. 
%, and let $\mathbf X_0$ be the LGCP driven by $\mathbf Z_0:=\exp(\mathbf Y_0)$. 
Assuming $\mu$ is a Borel function with an upper bounded, then almost sure continuity of $\mathbf Z_0$ implies almost sure integrability of $\mathbf Z=\mathbf Z_0\exp(\mu)$, and so $\mathbf X$ is well-defined. 
In turn, almost sure continuity of $\mathbf Z_0$ is implied if almost surely $\mathbf Y_0$ is locally sample H\"{o}lder continuous of some order $k>0$, which means the following: With probability one, for every $s \in \S^{d}$, there is a neighbourhood $V$ of $s$ and a constant $C_{V,k}$ so that 
\begin{align}\label{hc_def}
\sup_{u,v \in V, u\neq v}  \left| \frac{\textbf{Y}_0(u) - \textbf{Y}_0(v)}{d(u,v)^{k}} \right| \leq C_{V,k}.
\end{align}     
The following proposition is a special case of  \citet[][Corollary~4.5]{lang2016continuity}, and it
 provides a simple condition ensuring \eqref{hc_def}.
\begin{prop}\label{p:1}
Let 
\[\gamma(u,v) = \mathbb{E}[(\textbf{Y}_0(u)-\textbf{Y}_0(v))^{2}]/2\]
 be the variogram of a mean-zero GRF $\mathbf Y_0$. Suppose there exist numbers $s \in (0,1],$ $\ell \in (0,1),$ and $m>0$ such that 
%for all $u,v \in \S^{d}$,
\begin{align}\label{condition_hc}
\gamma(u,v) \leq m \,d(u,v)^{\ell/2}\qquad\mbox{whenever }d(u,v) < s.
\end{align}
 Then, for any $k \in(0, \ell/2)$, 
 $\mathbf Y_0$ is almost surely locally sample H\"{o}lder continuous of order $k$.
\end{prop}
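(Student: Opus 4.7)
The plan is to apply a Kolmogorov--Chentsov continuity criterion, using Gaussianity to upgrade the second-moment hypothesis \eqref{condition_hc} into control on arbitrarily high moments of the increments $\mathbf{Y}_0(u)-\mathbf{Y}_0(v)$. Since local H\"older continuity is by definition a local property, I would fix an arbitrary $s \in \mathbb{S}^d$ and reduce to a geodesic neighbourhood $V$ of $s$ on which the geodesic distance $d(\cdot,\cdot)$ is bi-Lipschitz equivalent to the Euclidean distance in a chart; this way the standard Euclidean Kolmogorov statement transfers to the sphere without change.

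The key calculation is that $\mathbf{Y}_0(u)-\mathbf{Y}_0(v)$ is centered Gaussian with variance $2\gamma(u,v)$, so that for every integer $p\geq 1$,
\[\mathbb{E}\bigl[(\mathbf{Y}_0(u)-\mathbf{Y}_0(v))^{2p}\bigr] = \frac{(2p)!}{2^{p}\,p!}\,\bigl(2\gamma(u,v)\bigr)^{p}.\]
Inserting \eqref{condition_hc} gives, whenever $d(u,v)<s$, a bound of the form $\mathbb{E}[(\mathbf{Y}_0(u)-\mathbf{Y}_0(v))^{2p}] \leq C_p\,d(u,v)^{p\ell/2}$ with a fully explicit constant $C_p$ depending only on $m$ and $p$.

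Next I would invoke Kolmogorov--Chentsov in the chart: a random field obeying $\mathbb{E}[|X(u)-X(v)|^{q}] \leq C\,|u-v|^{d+\alpha}$ for some $\alpha, q, C>0$ and all sufficiently close $u,v$ admits a locally sample-H\"older continuous modification of every order strictly less than $\alpha/q$. Applying this with $q=2p$ and exponent $p\ell/2$, one first requires $p\ell/2>d$ (so $p$ must be chosen large enough, say $p>2d/\ell$), and then lets $p$ grow; the attainable H\"older exponent approaches the threshold stated in the proposition. Since $k$ is required to satisfy $k<\ell/2$ strictly, only finitely many $p$ are actually needed.

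The main obstacle is technical rather than conceptual: one must pass from the Kolmogorov construction, which produces a \emph{modification} with the H\"older property, to the corresponding statement about the field $\mathbf{Y}_0$ itself. This is handled by noting that a GRF on a separable metric space can be taken separable, so that the H\"older continuous modification agrees almost surely with $\mathbf{Y}_0$ on a countable dense subset of $V$ and hence, by continuity of the modification, on all of $V$. The finiteness of the random constant $C_{V,k}$ appearing in \eqref{hc_def} then falls out of the standard dyadic-chaining estimate used inside Kolmogorov's construction together with a Borel--Cantelli argument, after possibly shrinking $V$ to ensure the hypothesis $d(u,v)<s$ is satisfied throughout $V\times V$.
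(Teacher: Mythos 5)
Your overall strategy --- using Gaussianity to turn the variogram bound into bounds on all even moments of the increments and then invoking Kolmogorov--Chentsov in charts --- is the natural one; the paper itself gives no proof but simply cites Corollary~4.5 of Lang et al.\ (2016), and your sketch is essentially the standard argument lying behind results of that type. The Gaussian moment identity, the bi-Lipschitz chart reduction, and your handling of the modification-versus-version issue are all acceptable.

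The gap is in the final exponent bookkeeping, and it is not cosmetic. From $\gamma(u,v)\le m\,d(u,v)^{\ell/2}$ you correctly obtain
\[
\mathbb{E}\bigl[(\mathbf{Y}_0(u)-\mathbf{Y}_0(v))^{2p}\bigr]\le \frac{(2p)!}{2^{p}\,p!}\,(2m)^{p}\, d(u,v)^{p\ell/2},
\]
so Kolmogorov--Chentsov with $q=2p$ and $d+\alpha=p\ell/2$ gives local sample H\"{o}lder continuity of every order
\[
k<\frac{\alpha}{q}=\frac{p\ell/2-d}{2p}=\frac{\ell}{4}-\frac{d}{2p},
\]
which increases to $\ell/4$ as $p\to\infty$, not to $\ell/2$. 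Your closing assertion that ``the attainable H\"{o}lder exponent approaches the threshold stated in the proposition'' is therefore off by a factor of two, and no choice of $p$ repairs it: for $k\in[\ell/4,\ell/2)$ your argument proves nothing. Nor can the method be sharpened, because the hypothesis only controls the canonical metric as $\sqrt{2\gamma(u,v)}\le\sqrt{2m}\,d(u,v)^{\ell/4}$, and a centered Gaussian field with $\mathbb{E}[(\mathbf{Y}_0(u)-\mathbf{Y}_0(v))^{2}]=d(u,v)^{\ell/2}$ (spherical fractional Brownian motion with Hurst index $\ell/4<1/4$) saturates this bound while failing to be locally H\"{o}lder of any order exceeding $\ell/4$. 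What your computation actually establishes is the conclusion for $k\in(0,\ell/4)$; to reach $k<\ell/2$ one needs the stronger hypothesis $\mathbb{E}[(\mathbf{Y}_0(u)-\mathbf{Y}_0(v))^{2}]\le m\,d(u,v)^{\ell}$, which is how the cited Corollary~4.5 is normalized. A careful write-up should have flagged this mismatch between your (correct) intermediate bounds and the proposition's stated threshold rather than asserting they agree.
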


% This will be the case for the specific LGCP model considered in Section~\ref{s:LGCPdata}.  

\subsection{Properties}

The following proposition is easily verified along similar lines as Proposition~5.4 in \citet{MW04} using \eqref{cox_intensity1}-\eqref{cox_intensity2} and the expression for the Laplace transform of a normally distributed random variable.

\begin{prop}\label{p:2} A LGCP $\mathbf X$ has intensity function and pair correlation functions given by
\begin{equation}\label{e:LGCP-intensities}
\lambda(u)=\exp(\mu(u)+c(u,u)/2),\qquad g(u,v)=\exp(c(u,v)),
\end{equation}
where $\mu$ and $c$ are the mean and covariance functions of the underlying GRF.
\end{prop}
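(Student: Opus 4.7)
The plan is to compute both formulas by plugging $\mathbf Z=\exp(\mathbf Y)$ into the Cox process expressions \eqref{cox_intensity1}--\eqref{cox_intensity2} and then using the fact that finite linear combinations of the values of the GRF $\mathbf Y$ are again normal, so that the required expectations are Laplace transforms of univariate normals that can be written in closed form.

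For the intensity, I would start from $\lambda(u)=\mathbb E[\mathbf Z(u)]=\mathbb E[\exp(\mathbf Y(u))]$. Since $\mathbf Y(u)$ is normally distributed with mean $\mu(u)$ and variance $c(u,u)$, the standard identity $\mathbb E[\exp(X)]=\exp(m+\sigma^2/2)$ for $X\sim N(m,\sigma^2)$ yields $\lambda(u)=\exp(\mu(u)+c(u,u)/2)$ at once. In particular this shows that $\lambda$ is finite everywhere (as already guaranteed by the assumption that $c(u,u)$ is finite), so $\mathbf Z_0=\mathbf Z/\lambda$ is well defined.

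For the pair correlation function, the Cox formula gives
\[
g(u,v)=\mathbb E[\mathbf Z_0(u)\mathbf Z_0(v)]=\frac{\mathbb E[\exp(\mathbf Y(u)+\mathbf Y(v))]}{\lambda(u)\lambda(v)}.
\]
By the GRF assumption, $\mathbf Y(u)+\mathbf Y(v)$ is a one-dimensional normal variable; its mean is $\mu(u)+\mu(v)$, and its variance expands, by bilinearity of the covariance, as $c(u,u)+c(v,v)+2c(u,v)$. Applying the same Laplace transform identity to this sum and then dividing by $\lambda(u)\lambda(v)=\exp(\mu(u)+\mu(v)+[c(u,u)+c(v,v)]/2)$, the mean terms and the diagonal covariance terms cancel, leaving exactly $g(u,v)=\exp(c(u,v))$.

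I do not anticipate a genuine obstacle here: the only ingredients are the Cox process moment formulas stated earlier, the definition of a GRF (which directly gives normality of $\mathbf Y(u)$ and of $\mathbf Y(u)+\mathbf Y(v)$), and the elementary Laplace transform of a normal. The sole care needed is to verify that the relevant expectations are finite, which follows from the finiteness of $c(u,u)$ and $c(v,v)$ together with Cauchy--Schwarz; this is implicitly subsumed in the standing assumption of finite variance.
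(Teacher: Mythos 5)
Your argument is correct and is precisely the route the paper indicates: it verifies the proposition via the Cox process moment formulas \eqref{cox_intensity1}--\eqref{cox_intensity2} together with the Laplace transform of a normally distributed random variable (following Proposition~5.4 of M{\o}ller and Waagepetersen, 2004). The computation of the variance of $\mathbf Y(u)+\mathbf Y(v)$ and the cancellation of the diagonal terms are exactly as intended, so there is nothing to add.
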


In other words, the distribution of $\mathbf X$ is specified by $(\lambda,g)$ because 
$\mu(u)=\log\lambda(u)-\log g(u,u)/2$ and  $c(u,v)=\log g(u,v)$ 
specify the distribution of the GRF. 
%Proposition~\ref{p:2} extends as follows. For a general point process on $\mathbb S^d$, the $n$-th order pair correlation function $g(u_1,\ldots,u_n)$ is defined for integers $n\ge2$ and multiple disjoint regions $U_1\ldots,U_n\subseteq\mathbb S^d$ by 
%\[\mathrm E\left[N\left(U_1\right)\cdots N\left(U_n\right)\right]=\int_{U_1}\cdots\int_{U_n}\lambda(u_1)\cdots\lambda(u_n)\,g(u_1,\ldots,u_n)\,\mathrm du_1\cdots\,\mathrm du_n\]
%provided this multiple integral is well-defined and finite.
%For the LGCP, this is the case for any $n$ and 
%\begin{equation}\label{e:gn} g(u_1,\ldots,u_n)=\exp\left(\sum_{1\le i<j\le n}c(u_i,u_j)\right).\end{equation}
%%or equivalently
%%\[g(u_1,\ldots,u_n)=\prod_{1\le i<j\le n}g(u_i,u_j).\] 
%This follows immediately as in \citet[][Theorem 1]{MW03}. 

By \eqref{e:LGCP-intensities}, second order intensity reweighted homogeneity of the LGCP is equivalent to isotropy of the covariance function, that is, $c(u,v)=c(r)$ depends only on $r=d(u,v)$. 
Parametric classes of isotropic covariance functions on $\S^{d}$ are provided by \citet{gneiting2013strictly}, see Table~\ref{tab:covariances} in Section~\ref{s:final1}, and by \citet{Moller2018}. In Section~\ref{s:LGCPdata}, we consider the so-called multiquadric covariance function which is isotropic and given by
\begin{align}\label{Multiquadric}
c_{(\sigma,\delta,\tau)}(r) = \sigma^{2}\left(\frac{(1-\delta)^{2}}{1+\delta^{2}-2\delta \cos r}\right)^{\tau}, \qquad r \in [0,\pi],
\end{align}
where $(\sigma,\delta,\tau)\in(0,\infty)\times(0,1)\times(0,\infty)$. 
\begin{prop}\label{p:3}
A mean-zero GRF $\mathbf Y_0$ with multiquadric covariance function is locally sample H\"{o}lder continuous of any order $k\in(0,1)$.
\end{prop}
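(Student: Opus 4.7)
The plan is to invoke Proposition~1, which reduces Proposition~3 to establishing a suitable bound on the variogram $\gamma(u,v)=c_{(\sigma,\delta,\tau)}(0)-c_{(\sigma,\delta,\tau)}(d(u,v))$ of the multiquadric GRF. The key observation is that the multiquadric covariance function is smooth in $r$: since $\delta\in(0,1)$, the denominator $A(r):=1+\delta^2-2\delta\cos r$ satisfies $A(r)\geq(1-\delta)^2>0$ on $[0,\pi]$, and hence $c_{(\sigma,\delta,\tau)}(r)=\sigma^2(1-\delta)^{2\tau}A(r)^{-\tau}$ is $C^\infty$ on $[0,\pi]$. So Taylor expansion around $r=0$ will supply the needed bound without any singularity issues.

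The central computation is the first two derivatives of $c$ at zero. Differentiating, $A'(r)=2\delta\sin r$ vanishes at $r=0$, so $c'(0)=0$. A further differentiation gives $c''(0)=-2\sigma^2\tau\delta/(1-\delta)^2$, whence
\[
\gamma(u,v)=c(0)-c(d(u,v))=\frac{\sigma^2\tau\delta}{(1-\delta)^{2}}\,d(u,v)^{2}+O\bigl(d(u,v)^{4}\bigr)
\]
as $d(u,v)\to 0$. Combining this expansion with continuity of $c$ on the compact interval $[0,1]$, I would deduce a uniform bound $\gamma(u,v)\leq C\,d(u,v)^{2}$ for $d(u,v)\leq 1$, with a constant $C$ depending only on $\sigma,\delta,\tau$.

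To feed this into Proposition~1, I would fix any $\ell$ in the admissible range and use the elementary inequality $r^{2}\leq r^{\ell/2}$ for $r\in[0,1]$ to promote the quadratic bound to $\gamma(u,v)\leq C\,d(u,v)^{\ell/2}$ whenever $d(u,v)<s:=1$. Proposition~1 then yields almost sure local sample H\"older continuity of order $k$ for every $k<\ell/2$, and letting $\ell$ approach the upper end of its allowed range delivers the statement for every $k\in(0,1)$.

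Conceptually nothing is difficult; the main bookkeeping is just the second-derivative evaluation at $r=0$ and the control of the $O(r^{4})$ remainder uniformly in $r$, which is routine because $c$ is smooth on $[0,\pi]$. The only subtlety worth flagging is that Proposition~1 gives a one-sided statement (H\"older of order $k$ for each $k$ below a threshold), so Proposition~3 is obtained by ranging $k$ over the whole open interval, rather than by a single application with one specific $\ell$.
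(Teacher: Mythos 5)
Your overall strategy coincides with the paper's: both proofs reduce Proposition~\ref{p:3} to the variogram bound $\gamma(u,v)\le C\,d(u,v)^2$ for small $d(u,v)$ and then invoke Proposition~\ref{p:1}. You obtain the quadratic bound by noting that $c$ is smooth (the denominator is bounded below by $(1-\delta)^2>0$), computing $c'(0)=0$ and $c''(0)=-2\sigma^2\tau\delta/(1-\delta)^2$, and controlling the Taylor remainder uniformly; the paper derives the same bound by elementary inequalities instead ($1-\cos x\le x^2/2$ applied to $c_{(1,\delta,1)}$, followed by $1-(1-x)^\tau\le\tau x$ in the case $\tau\ge1$), which yields an explicit constant $m=p\sigma^{2}\max\{1,\tau\}/2$ with $p=2\delta/(1+\delta^{2})$. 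These two derivations are interchangeable, and your derivative computation is correct.

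The step that does not go through as written is the last one. In Proposition~\ref{p:1} the exponent parameter is restricted to $\ell\in(0,1)$ and the conclusion is H\"{o}lder continuity of order $k\in(0,\ell/2)$; letting $\ell$ increase to the top of its allowed range therefore yields only $k\in(0,1/2)$, not $k\in(0,1)$ as you assert. To be fair, this is not a defect specific to your argument: the paper's own proof takes $\ell=2\alpha$ with $\alpha\in(0,1/2)$ and hence, read literally against Proposition~\ref{p:1}, also delivers only $k<1/2$. The discrepancy is resolved at the level of Proposition~\ref{p:1} itself: in \citet[][Corollary~4.5]{lang2016continuity} the admissible variogram exponent runs up to $2$ (a bound $\gamma(u,v)\le m\,d(u,v)^{\ell}$ with $\ell\in(0,2]$ gives H\"{o}lder continuity of every order $k<\ell/2$), so the quadratic bound $\gamma\le C\,d^{2}$ directly produces all $k\in(0,1)$ without any weakening of the exponent. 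If you rely on Proposition~\ref{p:1} exactly as stated, you should either weaken your claimed conclusion to $k\in(0,1/2)$ or note explicitly that you are invoking the stronger form of the continuity criterion.
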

\begin{proof}
By \eqref{Multiquadric}, for $r=d(u,v)$,
\begin{equation}\label{e:aa}
\gamma(u,v)=c_{(\sigma,\delta,\tau)}(0)-c_{(\sigma,\delta,\tau)}(r)=\sigma^2\left(1-c_{(1,\delta,1)}(r)^\tau\right).
\end{equation}
Here, letting
 $p = {2\delta}/{(1+\delta^{2})}$,  
 \begin{align}
c_{(1,\delta,1)}(r) &= \frac{(1-\delta)^{2}}{1+\delta^{2} - 2\delta \cos r} 
=\frac{1 - p}{1 - p \cos r } 
=1 + p\left(\frac{\cos r - 1}{1 - p \cos r}\right)\geq 1 + p(\cos r - 1)\nonumber \\
&\geq 1 - {p} r^2/{2},\label{star}
\end{align}
 where the first inequality follows from $0<p<1$ because $0<\delta<1$, and the second inequality follows from $1-\cos x\le x^2/2$ whenever $0\le x\le1$. If $\tau < 1$,  then 
 \[\gamma(u,v)\le \sigma^{2}\left(1 - c_{(1,\delta,1)}(r)\right)\le p\sigma^{2}r^{2}/2,\]
 where the first inequality follows from \eqref{e:aa} and the second from \eqref{star}.
If $\tau \geq 1$, then for any $\alpha\in(0,1/2)$,
\[\gamma(u,v)\le  \sigma^{2}\left(1 - \left(1 - {p}r^{2}/2\right)^{\tau}\right)\le 
p\sigma^{2}{\tau}r^{2}/2\leq p\sigma^{2}{\tau} r^{\alpha}/2,\]
where the first inequality follows from \eqref{e:aa}-\eqref{star} and the second from $1 - (1-x)^{\tau} \leq x\tau$ whenever $0\le x\le1$.  
Hence, for any $(\sigma,\delta,\tau)\in(0,\infty)\times(0,1)\times(0,\infty)$ and any $\alpha\in(0,1/2)$, \eqref{Multiquadric} satisfies \eqref{condition_hc} with $s=1$, $\ell = 2\alpha\in(0,1)$, and $m = p\sigma^{2}\tilde{\tau}/2$, where $\tilde{\tau} = \max\{1,\tau\}$.
\end{proof}

In fact locally sample H\"{o}lder continuity is satisfied when considering other commonly used parametric classes of covariance functions, 
but the proof will be case specific as shown in the proof of Proposition~\ref{p:gneitingclasses} in Section~\ref{s:final1}.

\subsection{Comparison of fitted Thomas processes and LGCPs for the data example}\label{s:LGCPdata}

In this section, to see how well the estimated Thomas process, $\mathbf X_{\mathrm{Thom}}$, obtained in \citet{lawrence2016point} fits the sky positions of galaxies discussed in Section~\ref{s:data}, we use methods not involving the $K$-function (or the related pair correlation function, cf.\ \eqref{e:K-g}) because it was used in the estimation procedure. 

The point pattern in Figure~\ref{galaxies_plot_b} was obtained by an independent thinning procedure of the point pattern in Figure~\ref{galaxies_plot_a}, with retention probability $\lambda_{\mathrm{min}}/\lambda(u)$ at location $u\in W$, where $\lambda(u)$ is given by \eqref{intensity} and $\lambda_{\mathrm{min}}:=\inf_{u\in \mathbb S^2}\lambda(u)$. 
If we imagine a realization of $\mathbf X_{\mathrm{Thom}}$ on $\mathbb S^2$ could had been observed so that
the independent thinning procedure could had taken place on the whole sphere, then the corresponding thinned Thomas process, $\mathbf X_{\mathrm{thin Thom}}$, is isotropic. Thus the commonly used $F,G,J$-functions would apply: Briefly, for a given isotropic point process $\mathbf X$ on $\mathbb S^d$, an arbitrary chosen location $u\in\mathbb S^d$, and $r\in[0,\pi]$, by definition $F(r)$ is the probability that $\mathbf X$ has a point in a cap $C(u,r)$ with center $u$ and geodesic distance $r$ from $u$ to the boundary of the cap; $G(r)$ is the conditional probability that $\mathbf X\setminus\{u\}$ has a point in $C(u,r)$ given that $u\in\mathbf X$; and $J(r)=(1-G(r))/(1-F(r))$ when $F(r)<1$. For the empirical estimates of the $F,G,J$-functions, as we only observe points within the subset $W\subset\mathbb S^2$ specified in Section~\ref{s:data}, edge correction factors will be needed, where we choose to use the border-corrected estimates from \citet{lawrence2016point}. 

Empirical estimates $\widehat F, \widehat G,\widehat J$ can then be used as test functions for the global rank envelope test, which is supplied with graphical representations of global envelopes for $\widehat F,\widehat G,\widehat J$ as described in \citet{Myllymaky2017}.  As we combined all three test functions as discussed in \citet{Mrkvicka2016} and \citet{Mrkvicka2017}, we followed their recommendation of using  $3\times 2499=7497$ simulations of $\mathbf X_{\mathrm{thin Thom}}$ for the calculation of $p$-values and envelopes. Briefly, under the claimed model, with probability 95\% we expect $\widehat F, \widehat G,\widehat J$ (the solid lines in Figure~\ref{fig:thomas_sub1}--\ref{fig:thomas_sub3}) to be within the envelope (either the dotted or dashed lines in Figure~\ref{fig:thomas_sub1}--\ref{fig:thomas_sub3} depending on the choice of $[a,b]$ as detailed below), and the envelope corresponds to a so-called global rank envelope test at level 5\%. 
In Table~1, the limits of the $p$-intervals correspond to liberal and conservative versions of the global rank envelope test \citep{Myllymaky2017}. Table~1 %and the panels (a)--(c) in
and Figure~\ref{fig:thomas_sub1}--\ref{fig:thomas_sub3} clearly show that the estimated Thomas process is not providing a satisfactory fit no matter if in the contrast \eqref{e:contrast}  the long integration interval $[a,b]=[0,1.396]$ from \citet{lawrence2016point} or the shorter interval $[a,b]=[0,0.175]$ (corresponding to 0-10 degrees) is used for para\-meter estimation. 
 When $[a,b]=[0,0.175]$, larger estimates $\hat{\kappa} = 6.67$ and $\hat{\xi} = 353.94$ are obtained as compared to $\hat{\kappa} = 5.64$ and $\hat{\xi} = 266.6$ from \citet{lawrence2016point}. Note that the $p$-values and envelopes are not much affected by the choice of integration interval in the minimum contrast estimation procedure, cf.\ Table~1 and Figure~\ref{fig:thomas_sub1}--\ref{fig:thomas_sub3}. The envelopes indicate that at short inter-point distances $r$, there is more aggregation in the data than expected under the two fitted Thomas processes.

\begin{table}[!ht]
\centering
\begin{tabular}{|l||r|r|}
\hline\
&$[a,b]=[0,0.175]$ &$[a,b]=[0,1.396]$ \\
\hline\
Thomas process & 0.01\% - 1.05\% & 0.01\% - 1.28\%\\
\hline\
LGCP & 24.02\% - 24.09\% & 0.01\% - 1.23\%\\
\hline
\end{tabular}
\caption{Intervals for $p$-values obtained from the global envelope test based on combining the $F,G,J$-functions and using either a short or long integration interval $[a,b]$ when calculating the contrast used for parameter estimation in the Thomas process or the LGCP.}
\end{table}

\begin{figure}[!ht]
\begin{subfigure}{0.33\linewidth}
\centering
\includegraphics[scale=.30]{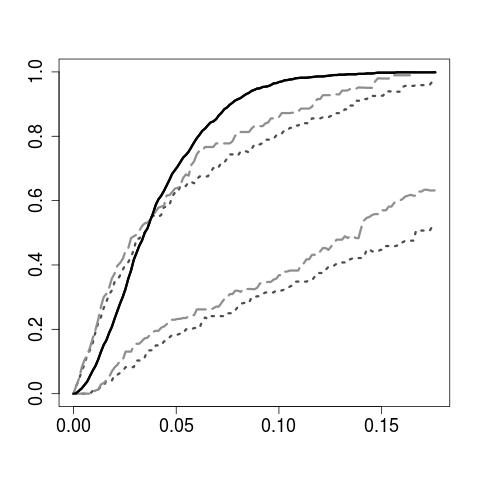}
\caption{$F$-function (Thomas).}
\label{fig:thomas_sub1}
\end{subfigure}%
\begin{subfigure}{0.33\linewidth}
\centering
\includegraphics[scale=.30]{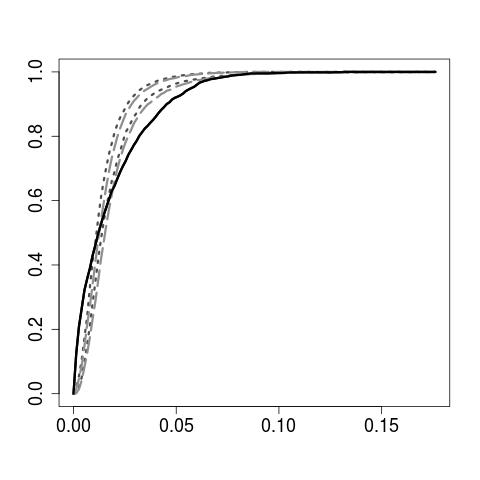}
\caption{$G$-function  (Thomas).}
\label{fig:thomas_sub2}
\end{subfigure}
\begin{subfigure}{0.33\linewidth}
\centering
\includegraphics[scale=.30]{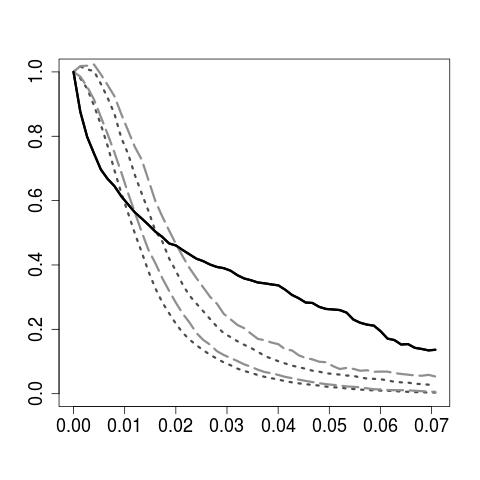}
\caption{$J$-function  (Thomas).}
\label{fig:thomas_sub3}
\end{subfigure}\\[1ex]
\begin{subfigure}{0.33\linewidth}
\centering
\includegraphics[scale=.30]{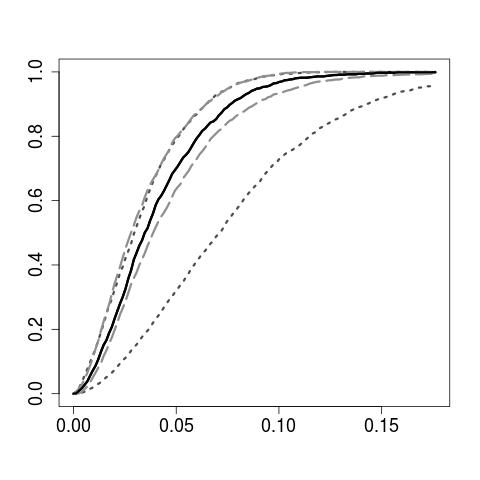}
\caption{$F$-function (LGCP).}
\label{fig:LGCP_sub1}
\end{subfigure}%
\begin{subfigure}{0.33\linewidth}
\centering
\includegraphics[scale=.30]{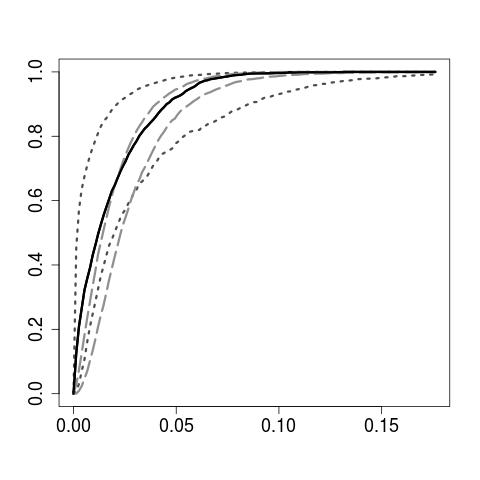}
\caption{$G$-function (LGCP).}
\label{fig:LGCP_sub2}
\end{subfigure}
\begin{subfigure}{0.33\linewidth}
\centering
\includegraphics[scale=.30]{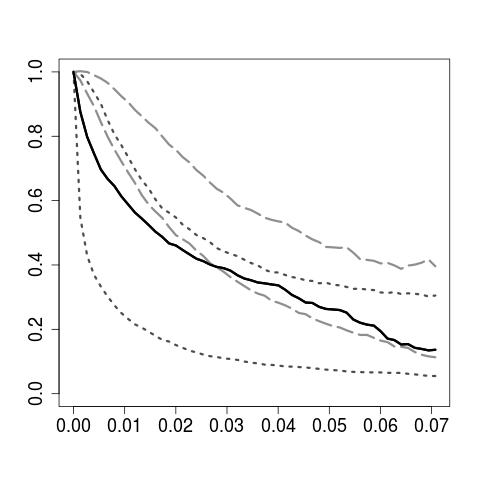}
\caption{$J$-function (LGCP).}
\label{fig:LGCP_sub3}
\end{subfigure}\caption{Empirical functional summary statistics and 95\% global envelopes under (a)--(c) fitted Thomas process and (d)--(f) fitted LGCPs for the sky positions of galaxies after thinning. The solid lines show $\widehat F(r),\widehat G(r),\widehat J(r)$ versus distance $r$ (measured in radians). The dotted and dashed lines limit the global envelopes and correspond to the short and long integration intervals $[a,b]=[0,0.175]$ and $[a,b]=[0,1.396]$, respectively, used in the minimum contrast estimation procedure.}
\label{fig:thomas}
\end{figure}

We also fitted an inhomogeneous LGCP, $\mathbf X_{\mathrm{LGCP}}$, still with intensity function given by \eqref{intensity} and with multiquadric covariance function as in \eqref{Multiquadric}. The same minimum contrast procedure as above was used except of course that in the contrast given by \eqref{e:contrast}, the theoretical $K$-function was obtained by combining  \eqref{e:K-g}, \eqref{e:LGCP-intensities}, and \eqref{Multiquadric}, where numerical calculation of the integral in \eqref{e:K-g} was used. The estimates are $(\hat{\sigma}^2,\hat{\delta},\hat{\tau})  = (4.50,0.99,0.25)$ if $[a,b]=[0,0.175]$ is the integration interval, and  $(\hat{\sigma}^2,\hat{\delta},\hat{\tau}) = (1.30,0.87,2.03)$ 
if $[a,b]=[0,1.396]$. For each choice of integration interval,
Figure~\ref{estimated_cova} shows the estimated log pair correlation function, that is, the
estimated covariance function of the underlying GRF, cf.\ \eqref{e:LGCP-intensities}. Comparing the two 
 pair correlation functions, the one based on the short integration interval is much larger for 
very short inter-point distances $r$, then rather similar to the other at a short interval of $r$-values, and afterwards again larger, so the fitted LGCP using the short integration interval is more aggregated than the other case.   
Further, Figure~\ref{K_function} shows the empirical $K$-function and the fitted $K$-functions for both the Thomas process and the LGCP when using the different integration intervals (for ease of comparison, we have subtracted $K_{\mathrm{Pois}}$, the theoretical $K$-function for a Poisson process, cf.\ \eqref{e:KPois}).  The fitted $K$-functions are far away from the empirical $K$-function for large values of $r$. However, having a good agreement for small and modest values of $r$ is more important, because the variance of the empirical $K$-function seems to be an increasing function of $r$ and the interpretation of the $K$-function becomes harder for large $r$-values. For small and modest $r$-values, using the LGCP model and the short integration interval provides the best agreement between the empirical $K$-function and the theoretical $K$-function, even when $r$ is outside the short integration interval.   

\begin{figure} %[H]
\centering
{\includegraphics[scale=0.5]{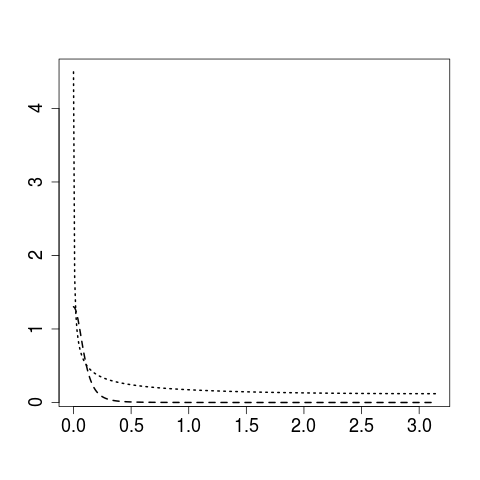}}
\caption{Plot of estimated pair correlation functions $g(r)$ (on a logarithmic scale) versus $r\in[0,\pi]$ for the LGCP when different integration intervals $[a,b]$ were used in the minumum contrast estimation procedure (dotted line: $[a,b]=[0,0.175]$; dashed line: $[a,b]=[0,1.396]$).}\label{estimated_cova}
\end{figure}

\begin{figure} %[H]
\begin{subfigure}{.5\linewidth}
\centering
\includegraphics[scale=0.4]{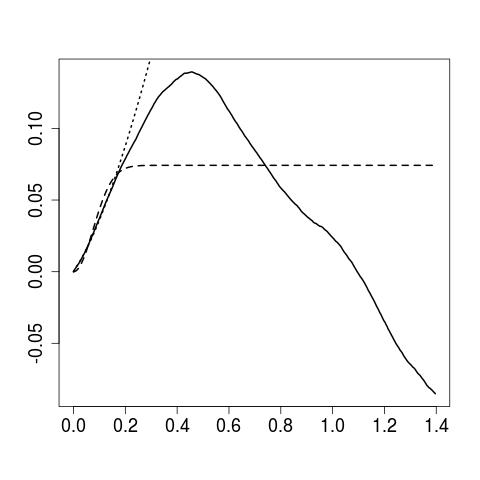}
\caption{Integration interval [0,0.175].}\label{small_interval}
\end{subfigure}
\begin{subfigure}{.5\linewidth}
\centering
\includegraphics[scale=0.4]{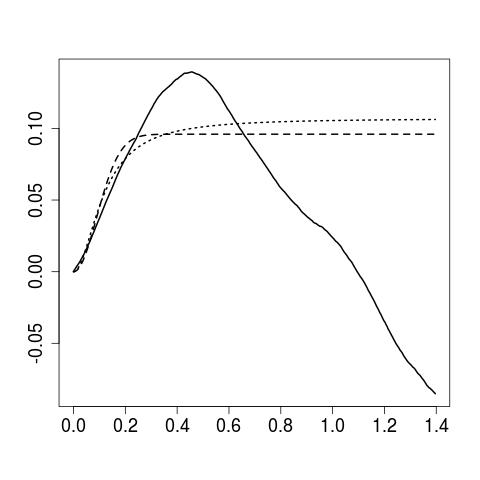}
\caption{Integration interval [0,1.396].}\label{big_interval}
\end{subfigure}
\caption{Fitted $K$-functions minus the theoretical Poisson $K$-function versus distance $r$ for the sky positions of galaxies when using different integration intervals in the minumum contrast estimation procedure. The solid lines correspond to the empirical functional summary statistics, and the dashed and dotted lines correspond to the theoretical functional summary statistics under the fitted Thomas processes  and LGCPs, respectively.}\label{K_function}
\end{figure}

Table~1 and %panels (d)--(f) in 
Figure~\ref{fig:LGCP_sub1}--\ref{fig:LGCP_sub3} show the results for the fitted LGCPs when making a model checking along similar lines as for the fitted Thomas processes (i.e., based on a thinned LGCP 
%$\mathbf X_{\mathrm{thin LGCP}}$ 
and considering the empirical $F, G,J$-functions together with global envelopes). 
Table~1 shows that the fitted LGCP based on the long integration interval gives an interval of similar low $p$-values as for the fitted Thomas process in \citet{lawrence2016point}, and Figure~\ref{fig:LGCP_sub2}--\ref{fig:LGCP_sub3} indicate that the data is more aggregated than expected under this fitted LGCP. Finally, the fitted LGCP based on the short integration interval gives $p$-values of about 24\%, and the empirical curves of the functional summary statistics appear in the center of the envelopes, cf.\ Figure~\ref{fig:LGCP_sub1}--\ref{fig:LGCP_sub3}. 

\begin{figure}[!ht]
\begin{subfigure}{0.33\linewidth}
\centering
\includegraphics[scale=.30]{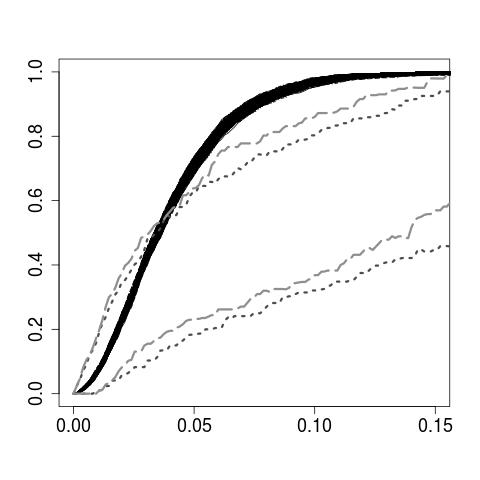}
\caption{$F$-function (Thomas).}
\label{fig:thomas_sub_rep1}
\end{subfigure}%
\begin{subfigure}{0.33\linewidth}
\centering
\includegraphics[scale=.30]{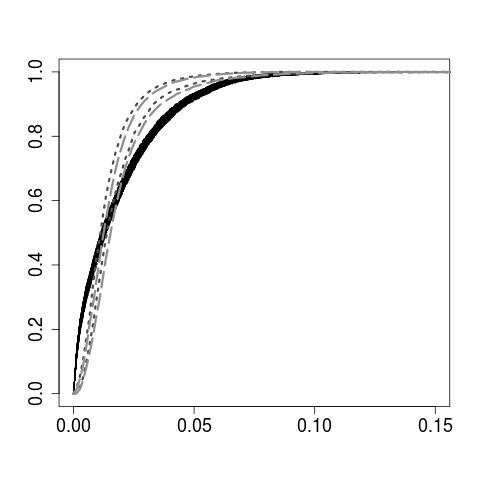}
\caption{$G$-function  (Thomas).}
\label{fig:thomas_sub_rep2}
\end{subfigure}
\begin{subfigure}{0.33\linewidth}
\centering
\includegraphics[scale=.30]{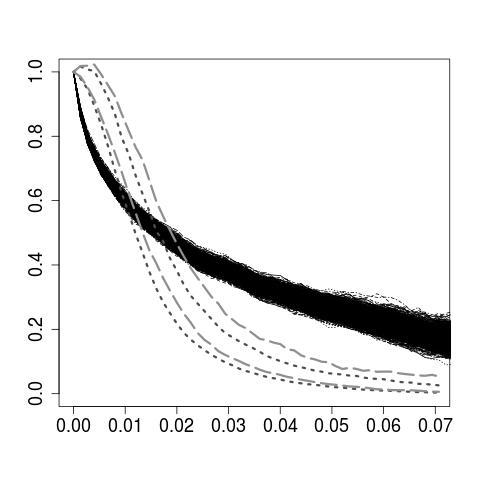}
\caption{$J$-function  (Thomas).}
\label{fig:thomas_sub_rep3}
\end{subfigure}\\[1ex]
\begin{subfigure}{0.33\linewidth}
\centering
\includegraphics[scale=.30]{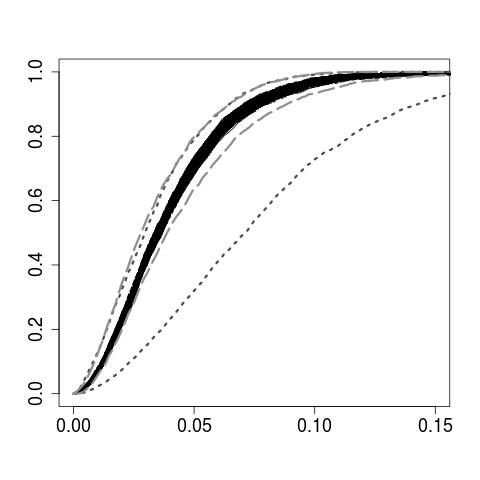}
\caption{$F$-function (LGCP).}
\label{fig:LGCP_sub_rep1}
\end{subfigure}%
\begin{subfigure}{0.33\linewidth}
\centering
\includegraphics[scale=.30]{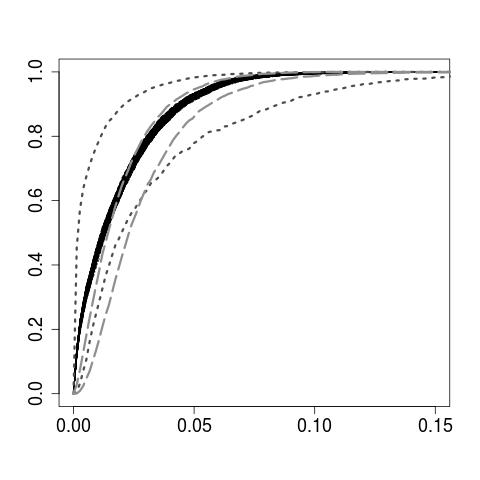}
\caption{$G$-function (LGCP).}
\label{fig:LGCP_sub_rep2}
\end{subfigure}
\begin{subfigure}{0.33\linewidth}
\centering
\includegraphics[scale=.30]{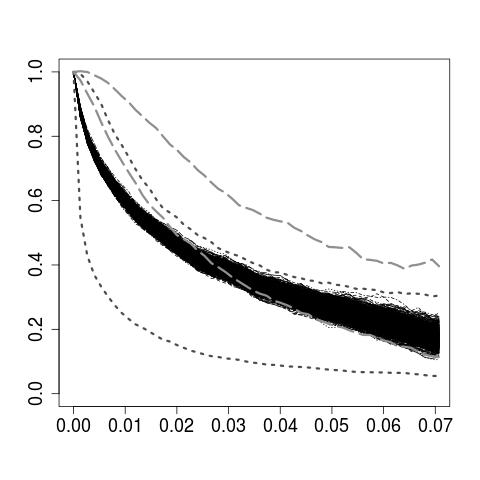}
\caption{$J$-function (LGCP).}
\label{fig:LGCP_sub_rep3}
\end{subfigure}\caption{Empirical functional summary statistics and 95\% global envelopes under (a)--(c) fitted Thomas process and (d)--(f) fitted LGCPs for the sky positions of galaxies after thinning. The 1000 solid lines show $\widehat F(r),\widehat G(r),\widehat J(r)$ versus distance $r$ (measured in radians) when the independent thinning procedure is repeated 1000 times. The dotted and dashed lines limit the global envelopes and correspond to the short and long integration intervals $[a,b]=[0,0.175]$ and $[a,b]=[0,1.396]$, respectively, used in the minimum contrast estimation procedure.}
\label{fig:thomas_rep}
\end{figure}

\begin{figure}[!ht]
\centering
\includegraphics[scale=.50]{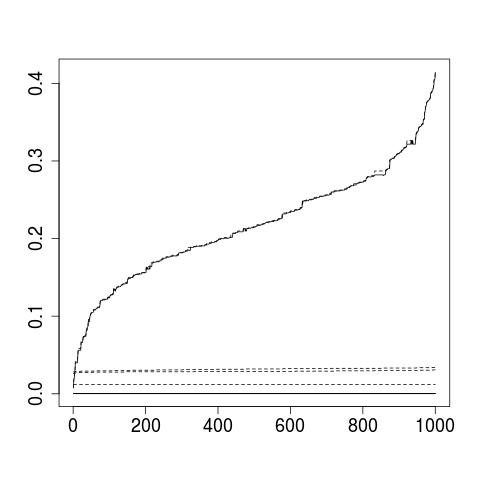}
\caption{Intervals for $p$-values obtained from the global envelope test based on combining the $F,G,J$-functions when repeating the independent thinning procedure 1000 times. The three lower solid lines are very close and therefore appear as one thick solid line in the plot. Each pair of dashed and solid lines from below to the top corresponds to conservative and liberal $p$-values for the fitted Thomas process, using first the long and second the short integration interval, and for the fitted LGCP, using first the long and second the short integration interval.}
\label{fig:intervals}
\end{figure}

Finally, we have studied how sensible the results will be when using  the independent thinning procedure. Figure~\ref{fig:thomas_rep} is similar to Figure~\ref{fig:thomas} except that the 1000 solid lines show the empirical estimates $\widehat F,\widehat G,\widehat J$ when we repeat the thinning procedure 1000 times. The empirical estimates do not vary much in the 1000 cases. 
Moreover, Figure~\ref{fig:intervals} is similar to Table~1 and shows the intervals for $p$-values obtained in the 1000 cases. 
The intervals for the fitted Thomas process, using the short or long integration interval, and for the fitted LGCP, using the long integration interval, are effectively not varying in the 1000 cases (the three lower solid curves correspond to liberal $p$-values and visually they appear as one curve which is close to 0: the curve in the LGCP case varies around $10^{-4}$, and the two curves in the Thomas process case vary around $4\times10^{-4}$). The intervals for the fitted LGCP, using the short integration interval, are much shorter than for the three other fitted models, and they vary from 0.75\% - 0.77\% to 41.44\% - 41.48\%, with only 12 out of the 1000 intervals below the 5\% level. Thus, the results are not so sensitive to the independent thinning procedure and the conclusion, namely that the fitted LGCP, using the short integration interval, is fitting well whilst the other fitted models do not, is almost the same in the 1000 cases.

\section{Further results and concluding remarks}\label{s:final}

\subsection{Existence}\label{s:final1}

As noticed in Section~\ref{s:DefExLGCP}, under mild conditions for the mean function $\mu$, almost sure locally sample H\"{o}lder continuity of the mean-zero GRF $\mathbf Y_0=\mathbf Y-\mu$ is a sufficient condition for  
establishing the existence of a LGCP driven by $\exp(\mathbf Y)$. In turn, almost sure locally sample H\"{o}lder continuity of $\mathbf Y_0$ is implied by the condition \eqref{condition_hc} in Proposition~\ref{p:1}, and  
Proposition~\ref{p:3} states that the multiquadric covariance function is satisfying this condition. As shown in the following proposition, \eqref{condition_hc} is satisfied for any of the parametric classes of isotropic covariance functions on $\mathbb S^d$ given by \citet{gneiting2013strictly}, see Table~\ref{tab:covariances}. In brief, these are the 
commonly used isotropic covariance functions which are expressible on closed form.  

\begin{table}[H]
    \centering
    \begin{tabular}{l||l|l}
        Model & Correlation function $c_0(r)$ & Parameter range \\
        \hline
        \hline
        Powered exponential & $\exp\left(-r^\alpha/\phi\right)$ & $\alpha\in(0,1]$, $\phi>0$\\
        Mat{\'e}rn & $\frac{2}{\Gamma(\nu)}\left( \frac{r}{2\phi} \right)^{\nu} K_{\nu}\left(\frac{r}{\phi}\right)$ & $0< \nu \leq \frac{1}{2}$, $\phi>0$\\
        Generalized Cauchy & $(1 + (\frac{r}{\phi})^{\alpha})^{-\tau/\alpha}$ & $\phi > 0$, $\alpha \in (0,1]$, $\tau > 0$\\
        Dagum & $1 - ((\frac{r}{\phi})^{\tau}/(1+(\frac{r}{\phi})^{\tau}))^{\frac{\alpha}{\tau}}$ & $\phi > 0$, $\tau \in (0,1]$ \\
        multiquadric & $\left(\frac{(1-\delta)^2}{1+\delta^2-2\delta\cos r}\right)^\tau$ & $\delta\in(0,1)$, $\tau>0$\\
        Sine power & $1- \sin(r/2)^{\alpha}$ & $\alpha \in (0,2)$ \\
        Spherical & $(1 + \frac{1}{2}\frac{r}{\phi})(1-\frac{r}{\phi})_{+}^{2}$ & $\phi > 0$  \\
        Askey & $(1-\frac{r}{\phi})_{+}^{\tau}$ & $\phi > 0$, $\tau \geq 2$ \\
        $C^{2}$-Wendland & $(1 + \tau\frac{r}{\phi})(1-\frac{r}{\phi})_{+}^{\tau}$ & $\phi \in (0,\pi]$, $\tau \geq 4$ \\
        $C^{4}$-Wendland & $(1 + \tau\frac{r}{\phi} + \frac{\tau^{2}-1}{3}\frac{r^{2}}{\phi^{2}})(1-\frac{r}{\phi})_{+}^{\tau}$ & $\phi \in (0,\pi]$, $\tau \geq 6$ \\
    \end{tabular}
    \caption{Parametric models for an isotropic correlation function $c_0(r)$ on $\mathbb{S}^{d}$, where $0\le r\le\pi$, $\Gamma$ is the gamma function, $K_{\nu}$ is the modified Bessel function of the second kind, and $t_{+} := \max\{t,0\}$ for $t\in\mathbb R$. For the powered exponential, Mat{\'e}rn,  generalized Cauchy, Dagum, multiquadric, and sine power models, $d\in \{1,2,\ldots\}$, whilst for the spherical, Askey, $C^2$-Wendland, and $C^4$-Wendland models, $d\in\{1,2,3\}$. For each model, the specified parameter range ensures that $c_0(r)$ is well-defined, cf.\ \citet{gneiting2013strictly}, and hence for any $\sigma^2>0$, $c(r):=\sigma^2c_0(r)$ is an isotropic covariance function.} 
    \label{tab:covariances}
\end{table}

\begin{prop}\label{p:gneitingclasses}
Any isotropic covariance function $c$ as given by Table~\ref{tab:covariances} is satisfying \eqref{condition_hc} for some $s \in (0,1],$ $\ell \in (0,1),$ and $m>0$.
\end{prop}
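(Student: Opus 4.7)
The plan is to reduce the ten-case statement to a single local condition on the correlation function $c_0(r)$ at the origin, and then verify this condition model by model. Because $c(r) = \sigma^2 c_0(r)$ is isotropic with $c_0(0)=1$, the variogram factors as
\[
\gamma(u,v) = \sigma^2\bigl(1 - c_0(d(u,v))\bigr),
\]
so condition \eqref{condition_hc} will follow at once once I show that, for each entry of Table~\ref{tab:covariances}, there exist $\alpha>0$ and $C>0$ with $1 - c_0(r)\le C r^\alpha$ for all $r$ in some neighbourhood $[0,s_0]$ of $0$. Indeed, given such a bound, I pick $s\in(0,\min(1,s_0)]$, choose any $\ell\in(0,1)$ with $\ell/2<\min(\alpha,1/2)$, and use $r^\alpha\le r^{\ell/2}$ for $r\in[0,1]$ to conclude that $\gamma(u,v)\le m\,d(u,v)^{\ell/2}$ with $m=C\sigma^2$. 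So the whole proposition is reduced to a local H\"older estimate on $1 - c_0$ at $r=0$.

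For the case-by-case step I would proceed as follows. For the powered exponential, apply $1-e^{-x}\le x$ to obtain $1-c_0(r)\le r^\alpha/\phi$. For the sine power, use $\sin(r/2)\le r/2$ so $1-c_0(r)\le (r/2)^\alpha$. The multiquadric model is already covered by Proposition~\ref{p:3}. For the generalized Cauchy, a Bernoulli-type inequality on $(1+x)^{-\tau/\alpha}$ gives $1 - c_0(r)\le (\tau/\alpha)(r/\phi)^\alpha$ once $r$ is small. For the Dagum model, $(r/\phi)^\tau/(1+(r/\phi)^\tau)\le (r/\phi)^\tau$, so $1 - c_0(r)\le (r/\phi)^\alpha$ for small $r$. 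For the spherical, Askey, $C^2$-Wendland and $C^4$-Wendland models the correlation function is a polynomial in $r$ on $[0,\phi]$, so a first-order Taylor expansion at $0$ (which has a bounded derivative, or in the Wendland cases a vanishing first derivative) directly yields $1 - c_0(r) = O(r)$, which is more than enough.

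The main obstacle is the Mat\'ern case, which is the only entry whose correlation function is not elementary and whose behaviour near $0$ requires the asymptotics of the modified Bessel function. For $\nu\in(0,1/2)$, I would invoke the expansion
\[
K_\nu(x) = \tfrac{1}{2}\Gamma(\nu)(x/2)^{-\nu} + \tfrac{1}{2}\Gamma(-\nu)(x/2)^{\nu} + O\bigl(x^{2-\nu}\bigr),\qquad x\downarrow 0,
\]
which, after substitution into $c_0(r)=\frac{2}{\Gamma(\nu)}(r/2\phi)^\nu K_\nu(r/\phi)$, yields
\[
c_0(r) = 1 + \frac{\Gamma(-\nu)}{\Gamma(\nu)}\bigl(r/(2\phi)\bigr)^{2\nu} + O(r^2).
\]
Since $\Gamma(-\nu)/\Gamma(\nu)<0$ for $\nu\in(0,1/2)$, this gives $1-c_0(r)\le C r^{2\nu}$ for small $r$, so the reduction step applies with $\alpha=2\nu\in(0,1)$. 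The boundary value $\nu=1/2$ simply reduces to the exponential covariance $e^{-r/\phi}$ and is subsumed under the powered exponential case. Once this Mat\'ern case is settled, assembling the ten local bounds into the statement of the proposition is routine.
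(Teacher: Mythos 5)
Your proposal is correct and takes essentially the same route as the paper: reduce to the correlation function $c_0$, establish a local power-law bound $1-c_0(r)\le Cr^{\alpha}$ near $r=0$ for each model, and convert this into \eqref{condition_hc} with $\ell/2<\min\{\alpha,1/2\}$. The only cosmetic differences are that you obtain the bound via direct elementary inequalities (Bernoulli, $\sin x\le x$, Taylor expansion of the polynomial models) where the paper computes the limit $\lim_{r\downarrow0}(c_0(0)-c_0(r))/r^{A}=B$ using L'Hospital's rule, and for the Mat\'ern case you quote the standard small-argument asymptotics of $K_{\nu}$ where the paper derives the same coefficient from the series representation of $K_{\nu}$; both yield $A=2\nu$ and matching constants.
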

\begin{proof}
We have already verified this for the multiquadric model, cf.\ Proposition~\ref{p:3}. For any of
models in Table~\ref{tab:covariances}, the variance $c(0)$ is strictly positive, so
dividing the left and right side in \eqref{condition_hc} by $c(0)$, we can without loss of generality assume the covariance function $c(r)$ to be a correlation function, i.e.\ $c=c_0$, cf.\ the caption to Table~\ref{tab:covariances}. %, where as usual we use the notation $r=d(u,v)$ for $u,v\in\mathbb S^d$. 
Hence we need only to show the existence of numbers $s\in(0,1],$ $\ell\in(0,1)$, and $m>0$ so that the condition
\begin{equation}\label{e:aaaa}
c_0(0)-c_0(r)\le m r^{\ell/2}\qquad\mbox{whenever }r<s
\end{equation}
is satisfied, where $c_0(0)=1$. 

Consider the powered exponential model. 
If $0<\alpha<1$ and $r\le 1$, then $r^\alpha\le r^{\alpha/2}$, and so we obtain \eqref{e:aaaa} because
\[1-c_0(r)\le r^\alpha/\phi\le r^{\alpha/2}/\phi,\]
where the first inequality follows from $\exp(-x)\ge 1-x$. Further, 
the case $\alpha=1$ is the special case of the Mat{\'e}rn model with shape parameter $\nu=1/2$, which is considered below.
For the remaining eight models in Table~\ref{tab:covariances}, we establish a limit 
\begin{equation}\label{limit_prop}
    \lim_{r \downarrow 0} \frac{c_0(0) - c_0(r)}{r^{A}} = B,
\end{equation}
for some $A>0$ and $B>0$, where $(A,B)$ depends on the specific model. %, see Table~\ref{tab:covariances2}. 
The limit \eqref{limit_prop} implies that \eqref{e:aaaa} holds for some $s\in(0,1]$, $\ell=\min\{A/2,1-\epsilon\}$ with $0<\epsilon<1$, and $m>B$. Note that $0<\ell<1$.

For example, consider the Mat{\'e}rn model. 
%\begin{equation}\label{matern_cov}
%c(r) = \frac{2}{\Gamma(\nu)}\left( \frac{r}{2\phi} \right)^{\nu} K_{\nu}\left(\frac{r}{\phi}\right), \qquad 0 < \nu \leq \frac{1}{2},\ \phi>0,
%\end{equation}
By DLMF (2018, Equation 10.27.4)\nocite{NIST:DLMF}, when $0<\nu\le\frac12$,
$$K_{\nu}(r) = \frac{\pi}{2}\frac{\left(\frac{r}{2}\right)^{-\nu}E_{-\nu}(r) - \left(\frac{r}{2}\right)^{\nu}E_{\nu}(r)}{\sin(\nu\pi)},$$
where
$$E_{\nu}(r) = \sum_{n = 0}^{\infty} \frac{r^{2n}}{n!\Gamma(n + \nu + 1)4^{n}}.$$
Thus, defining $A_{\nu} = \pi/(\sin(\pi\nu)\Gamma(\nu))$, the expression of the Mat{\'e}rn correlation function in Table~\ref{tab:covariances} can be rewritten as
$$c_0(r) = A_{\nu}\left(E_{-\nu}\left(\frac{r}{\phi}\right) - \left(\frac{r}{2\phi}\right)^{2\nu}E_{\nu}\left(\frac{r}{\phi}\right)\right).$$
Then 
\[ \lim_{r \downarrow 0}\frac{c_0(0)-c_0(r)}{r^{2\nu}}=-A_\nu \lim_{r \downarrow 0}\sum_{n=1}^\infty\frac{(r/\phi)^{2n}r^{-2\nu}}{n!\Gamma(n + \nu + 1)4^{n}} + A_\nu(2\phi)^{-2\nu}\lim_{r \downarrow 0}E_\nu(r/\phi)=A_\nu/((2\phi)^{2\nu}\Gamma(\nu+1)),\]
and so \eqref{limit_prop} holds, with $A=2\nu$ and $B=A_\nu/((2\phi)^{2\nu}\Gamma(\nu+1))$. 
%\begin{equation*}\label{limit_matern}
%    \lim_{r \rightarrow 0} \frac{c(0) - c(r)}{r^{2\nu}} = \frac{A_\nu}{(2\phi)^{2\nu}\Gamma(\nu+1)}.
%\end{equation*} 
 
 For the remaining seven models, the values of $A$ and $B$ in \eqref{limit_prop} are straightforwardly derived, using L'Hospital's rule when considering the generalized Cauchy, Askey, $C^2$-Wendland, and $C^4$-Wendland models.
  The values are given in Table~\ref{tab:covariances}.

\begin{table}[H]
    \centering
    \begin{tabular}{l||l|l}
        Model & $A$ & $B$ \\
        \hline
        \hline
        Generalized Cauchy & $\alpha$ & $\tau/(\alpha \phi^{\alpha})$ \\
        Dagum & $\alpha$ & $\phi^{-\alpha}$ \\
        Sine power & $\alpha$ & $2^{-\alpha}$ \\
        Spherical & 1 & $\frac{3}{2\phi}$ \\
        Askey & 1 & $\tau/\phi$ \\
        $C^{2}$-Wendland & 2 & $\tau(\tau+1)\phi^{-2}/2$\\
        $C^{4}$-Wendland & 2 & $(\tau+1)(\tau+2)\phi^{-2}/3$ \\
    \end{tabular}
    \caption{List of $A$ and $B$ values for the last seven models considered in the proof of Proposition~\ref{p:gneitingclasses}.} 
    \label{tab:covariances2}
\end{table}

\end{proof} 

\subsection{Moment properties, Palm distribution, and statistical inference}\label{s:final2}

As demonstrated, a LGCP on the sphere possesses useful theoretical properties, in particular moment properties as provided by Proposition~\ref{p:2}.
% and a simple characterization of the reduced Palm distribution as another LGCP. 
We exploited these expressions for the intensity and pair correlation function when dealing with the inhomogeneous $K$-function, which concerns the second moment properties of a second order intensity reweighted homogeneous point process. 
% It would also be interesting to exploit the result for the reduced Palm distribution as discussed in \citet{Coeur17}. 
 
Proposition~\ref{p:2} extends as follows. For a general point process on $\mathbb S^d$, the $n$-th order pair correlation function $g(u_1,\ldots,u_n)$ is defined for integers $n\ge2$ and multiple disjoint regions $U_1\ldots,U_n\subseteq\mathbb S^d$ by 
\[\mathrm E\left[N\left(U_1\right)\cdots N\left(U_n\right)\right]=\int_{U_1}\cdots\int_{U_n}\lambda(u_1)\cdots\lambda(u_n)\,g(u_1,\ldots,u_n)\,\mathrm du_1\cdots\,\mathrm du_n\]
provided this multiple integral is well-defined and finite. The following proposition follows immediately as in \citet[][Theorem 1]{MW03}.  
\begin{prop}\label{p:5}
For any integer $n\ge2$, 
a LGCP on $\mathbb S^d$ has $n$-th order pair correlation function  given by 
\begin{equation}\label{e:gn} g(u_1,\ldots,u_n)=\exp\left(\sum_{1\le i<j\le n}c(u_i,u_j)\right),\end{equation}
%or equivalently
%\[g(u_1,\ldots,u_n)=\prod_{1\le i<j\le n}g(u_i,u_j),\]
where $c$ is the covariance function of the underlying GRF. 
\end{prop}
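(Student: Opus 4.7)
The plan is to reduce the calculation to the Cox process representation and then compute the expectation of a product of log-normal variables via the Laplace transform of a multivariate normal, paralleling the Euclidean argument in Møller and Waagepetersen (2003, Theorem 1). The only reason to worry about adaptation is that the underlying space is now $\mathbb{S}^d$, but since the factorial moment machinery for Poisson and Cox processes is measure-theoretic and indifferent to whether the carrier space is Euclidean or a sphere with its surface measure, the transfer is mechanical.

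First I would use the Cox construction of $\mathbf{X}$: conditional on the driving field $\mathbf{Z}=\exp(\mathbf{Y})$, the process $\mathbf{X}$ is Poisson with intensity function $\mathbf{Z}$. For disjoint regions $U_1,\ldots,U_n\subseteq\mathbb{S}^d$, the Poisson factorial moment identity gives
\[
\mathrm{E}\!\left[N(U_1)\cdots N(U_n)\,\big|\,\mathbf Z\right]=\int_{U_1}\cdots\int_{U_n}\mathbf Z(u_1)\cdots\mathbf Z(u_n)\,\mathrm du_1\cdots\mathrm du_n.
\]
Taking expectations and applying Fubini (which requires an integrability check, obtained by dominating with the expected product of log-normals and invoking the assumption that the multiple integrals defining $g$ are well-defined and finite), I pull the expectation inside the integral to obtain
\[
\mathrm{E}\!\left[N(U_1)\cdots N(U_n)\right]=\int_{U_1}\cdots\int_{U_n}\mathrm{E}\!\left[\exp\!\left(\sum_{i=1}^n\mathbf Y(u_i)\right)\right]\mathrm du_1\cdots\mathrm du_n.
\]

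Next, since $\mathbf Y$ is a Gaussian random field, the finite-dimensional vector $(\mathbf Y(u_1),\ldots,\mathbf Y(u_n))$ is multivariate normal with mean vector $(\mu(u_i))$ and covariance matrix $(c(u_i,u_j))$, so $\sum_i\mathbf Y(u_i)$ is normal with mean $\sum_i\mu(u_i)$ and variance $\sum_i c(u_i,u_i)+2\sum_{i<j}c(u_i,u_j)$. The Laplace transform of a normal variable then gives
\[
\mathrm{E}\!\left[\exp\!\left(\sum_{i=1}^n\mathbf Y(u_i)\right)\right]=\exp\!\left(\sum_{i=1}^n\mu(u_i)+\tfrac12\sum_{i=1}^n c(u_i,u_i)+\sum_{1\le i<j\le n}c(u_i,u_j)\right).
\]

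Finally, I compare with the definition of $g(u_1,\ldots,u_n)$. Using the intensity formula $\lambda(u_i)=\exp(\mu(u_i)+c(u_i,u_i)/2)$ from Proposition~\ref{p:2}, the factor $\exp(\sum_i\mu(u_i)+\tfrac12\sum_i c(u_i,u_i))$ is exactly $\prod_i\lambda(u_i)$, leaving the residual factor $\exp(\sum_{1\le i<j\le n}c(u_i,u_j))$, which must therefore equal $g(u_1,\ldots,u_n)$ on Lebesgue-almost all $n$-tuples of distinct points; by continuity of the right-hand side (which follows from continuity of $c$, inherited from the H\"older continuity setup of Section~\ref{s:DefExLGCP}) the identity holds pointwise, giving \eqref{e:gn}. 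There is no serious obstacle here; the only point requiring care is the Fubini step, which is what the proposition's integrability hypothesis is designed to underwrite.
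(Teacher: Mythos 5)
Your argument is correct and is exactly the computation the paper invokes by citing M{\o}ller and Waagepetersen (2003, Theorem 1): condition on the driving field to get the Poisson factorial moment identity, evaluate $\mathrm E[\exp(\sum_i\mathbf Y(u_i))]$ via the Laplace transform of a multivariate normal, and factor out $\prod_i\lambda(u_i)$ using Proposition~\ref{p:2}. The paper gives no further detail, so your write-up simply fills in the same standard steps; no discrepancy to report.
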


%This follows immediately as in \citet[][Theorem 1]{MW03}.  
  Higher-order pair correlation functions as given by \eqref{e:gn} may be used for constructing further functional summaries e.g.\ along similar lines as the third-order characteristic studied in \citet{Moller1998} and the  inhomogeneous $J$-function studied in 
  \citet[][Section 5.3]{Cronie:Lieshout:15}.
  
It would also be interesting to exploit the following result for the reduced Palm distribution of a LGCP
 on the sphere %for the reduced Palm distribution 
as discussed in \citet{Coeur17} in the case of LGCPs on $\mathbb R^d$. Intuitively,
%Incidentally, a word about 
the reduced Palm distribution of a point process $\mathbf X$ on $\mathbb S^d$ at a given point $u\in\mathbb S^d$ corresponds to  %this is 
the distribution of $\mathbf X\setminus\{u\}$ conditional on that $u\in\mathbf X$; see \citet{lawrence2016point} and \citet{Moller_summary_2017}. Along similar lines as in \citet[][Theorem~1]{Coeur17}, we obtain immediately the following result.
\begin{prop}\label{p:4}
Consider a LGCP $\mathbf X$ whose underlying GRF has mean and covariance functions $\mu$ and $c$.
For any $u\in\mathbb S^d$, the reduced Palm distribution of $\mathbf X$ at $u$ is a LGCP, where the underlying GRF has unchanged covariance function $c$ but mean function $m_u(v)=m(v)+c(u,v)$ for $v\in\mathbb S^d$.
\end{prop}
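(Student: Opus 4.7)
The plan is to characterize the reduced Palm distribution via the Campbell--Mecke identity, condition on the driving Gaussian field so that Slivnyak--Mecke applies to the Poisson process, and then apply a Cameron--Martin type shift for the underlying GRF. Nothing in the argument exploits features of $\mathbb{R}^d$, so the proof transfers verbatim from the Euclidean case treated in \citet[][Theorem~1]{Coeur17}.

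The Campbell--Mecke formula characterizes $\mathrm E_u^!$ through the identity
\[
\mathrm E \sum_{u \in \mathbf X} h(u, \mathbf X \setminus \{u\})
  = \int_{\mathbb S^d} \lambda(u)\, \mathrm E_u^![h(u, \mathbf X)]\, \mathrm du
\]
for every non-negative measurable functional $h$ on $\mathbb S^d \times N(\mathbb S^d)$. Conditioning on $\mathbf Y$, the process $\mathbf X \mid \mathbf Y$ is Poisson with intensity $\mathbf Z = \exp(\mathbf Y)$. The Slivnyak--Mecke theorem (the reduced Palm distribution of a Poisson process equals its original distribution) together with Fubini then rewrites the left-hand side as
\[
\int_{\mathbb S^d} \mathrm E\!\left[\exp(\mathbf Y(u))\, F_u(\mathbf Y)\right] \mathrm du,
\qquad F_u(\mathbf Y) := \mathrm E\bigl[h(u, \mathbf X) \mid \mathbf Y\bigr].
\]

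The crucial ingredient is the Cameron--Martin identity for the GRF: for any bounded measurable functional $F$,
\[
\mathrm E\!\left[\exp(\mathbf Y(u))\, F(\mathbf Y)\right]
  = \mathrm E[\exp(\mathbf Y(u))] \cdot \mathrm E[F(\mathbf Y^{(u)})]
  = \lambda(u)\, \mathrm E[F(\mathbf Y^{(u)})],
\]
where $\mathbf Y^{(u)}$ is a GRF with the same covariance function $c$ but mean $v \mapsto \mu(v) + c(u,v) = m_u(v)$, and where $\lambda(u) = \exp(\mu(u) + c(u,u)/2)$ by Proposition~\ref{p:2}. Applied with $F = F_u$ and substituted into the previous display, the factors of $\lambda(u)$ cancel and yield
\[
\mathrm E_u^![h(u, \mathbf X)] = \mathrm E\bigl[F_u(\mathbf Y^{(u)})\bigr],
\]
which is precisely the expectation of $h(u, \cdot)$ under the LGCP driven by $\exp(\mathbf Y^{(u)})$. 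Since $h$ was arbitrary, this identifies the reduced Palm law at $u$ as the claimed LGCP.

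The main obstacle is justifying the Cameron--Martin shift beyond finite-dimensional marginals. For a Gaussian vector it is a routine completion-of-the-square, so the identity holds for cylinder functionals $F$ depending on finitely many evaluations $\mathbf Y(v_1), \ldots, \mathbf Y(v_n)$; a monotone class / Dynkin $\pi$-$\lambda$ argument then extends it to all bounded measurable $F$ on path space. The accompanying Fubini exchanges are controlled via the almost sure integrability of $\exp(\mathbf Y)$ ensured by the existence conditions of Section~\ref{s:DefExLGCP}.
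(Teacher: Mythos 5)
Your proof is correct and follows exactly the route the paper relies on: the paper offers no independent argument but simply invokes \citet[][Theorem~1]{Coeur17}, whose proof is precisely your combination of the Campbell--Mecke identity, the conditional Slivnyak--Mecke theorem for the Poisson process given the GRF, and the Gaussian exponential-tilting (Cameron--Martin) shift extended from cylinder functionals by a monotone class argument. You have in effect supplied the details that the paper leaves to the citation, including the observation that nothing in the argument uses the Euclidean structure, so it transfers verbatim to $\mathbb S^d$.
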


Bayesian analysis of inhomogeneous LGCPs on the sphere has previously been considered in \citet{Simpson.et.al} using the R-INLA approach: Their covariance model for the underlying GRF %Gaussian random field (as specified in Section~\ref{s:DefExLGCP}) 
is an extension of the Mat{\'e}rn covariance function
 defined as the covariance function at any fixed time to the solution to a stochastic partial differential equation which is stationary in time and isotropic on the sphere. Using the R-INLA approach, a finite approximation of their LGCP based on a triangulation of the sphere is used, which for computational efficiency requires $\alpha:=\nu+d/2$ to be an integer, where $\nu>0$ is the shape parameter (the expression $\alpha=\nu-d/2$ in their paper is a typo). 
\citet{Simpson.et.al} considered an inhomogeneous LGCP defined only on the world’s oceans, with $d=2$ and $\alpha=2$, so $\nu=1$. 
 In the original Mat{\'e}rn model \citep[][Table 1]{gneiting2013strictly}, $0<\nu\le0.5$ and the covariance function has a nice expression in term of a Bessel function, but otherwise the covariance function can only be expressed by an infinite series in terms of spherical harmonics \citep[][Section 6.4]{SimpsonThesis}. In contrast, we have focused on covariance functions $c$ which are expressible on closed form so that we can easily work with the pair correlation function $g=\exp(c)$. Indeed it could be interesting to apply
  the R-INLA approach as well as other likelihood based methods of inference \citep{moeller:waage17}, though they are certainly much more complicated than the simple inference procedure considered in the present paper. 
 
For comparison with the analysis of the sky positions of galaxies in \citet{lawrence2016point}, we used a minimum contrast estimation procedure, but other simple and fast methods such as composite likelihood \citep[][and the references therein]{guan:06,moeller:waage17} could have been used as well. 
It is well-known that such estimation procedures can be sensitive to the choice of user-specified parameters. We demonstrated this for the choice of integration interval in the contrast, where using a short interval and an inhomogeneous LGCP provided a satisfactory fit, in contrast to using an inhomogeneous Thomas process or a long interval. It could be interesting to use more advanced estimation procedures such as maximum likelihood and Bayesian inference. This will involve a time-consuming missing data simulation-based approach  \citep[][and the references therein]{MW04,moeller:waage17}.

\subsection{Modelling the sky positions of galaxies}  

The Thomas process is a mechanistic model since it has an interpretation as a cluster point process \citep{lawrence2016point}. The original Thomas process in $\mathbb R^3$ (i.e., using a 3-dimensional isotropic zero-mean normal distribution as the density for a point relative to its cluster centre)  may perhaps appear natural for positions of galaxies in the 3-dimensional space -- but we question if the Thomas process from \citet{lawrence2016point} is a natural model for the sky positions because these points are obtained by projecting clusters of galaxies in space to a sphere which may not produce a clear clustering because of overlap. Rather, we think both the inhomogeneous Thomas and the inhomogeneous LGCP should be viewed as empirical models for the data. Moreover, it could be investigated if the Thomas process replaced by another type of Neyman-Scott process \citep{neyman1958statistical,neyman1972processes} or a (generalised) shot noise Cox process \citep{moeller:03,moeller:torrisi:05} would provide an adequate fit. %, but we leave this for future research. 

As a specific model, we only considered the multiquadric covariance function for the underlying GRF of the LGCP. A more flexible model could be the spectral model studied in \citet{Moller2018}, where a parametric model for the eigenvalues of the spectral representation of an isotropic covariance function $c(r)$ ($r\ge0$) in terms of spherical harmonics is used (incidentally, the eigenvalues are also known for the multiquadric covariance function if $\tau=1/2$). The spectral representation allows a Karhunen-Lo\`{e}ve representation which could be used for simulation. However, for the data analysed in this paper, we found it easier and faster just to approximate the GRF $\mathbf Y=\{\mathbf Y(u):u\in\mathbb S^2\}$, using a finite grid $I\subset\mathbb S^2$ so that each $\mathbf Y(u)$ is replaced by $\mathbf Y(v)$ if $v$ is the nearest grid point to $u$, and using the singular value decomposition when simulating the finite random field $\{\mathbf Y(v):v\in I\}$. When using spherical angles $(\theta,\psi)$ as in \eqref{intensity}, a regular grid over $[0,\pi]\times[0,2\pi)$ can not be recommended, since the density of grid points will large close to the poles and small close to equator.  Using a regular grid $I\subset\mathbb S^2$ avoids this problem, nonetheless, there are only five regular grids on the sphere \citep{coxeter1973regular}. We used a nearly-regular grid consisting of 4098 points on the sphere \citep[][and the references therein]{Szalay:2005} and computed using the R package \texttt{mvmesh} \citep{nolan:2016}.

\subsubsection*{Acknowledgements}
Supported by The Danish Council for Independent Research $|$ Natural Sciences, grant DFF – 7014-00074
"Statistics for point processes in space and beyond", and by the "Centre for Stochastic Geometry and 
Advanced Bioimaging", funded by grant 8721 from the Villum Foundation.

\section*{References}
\bibliographystyle{abbrvnat}
\bibliography{bibliography}

\end{document}